\begin{document}
\renewcommand{\Im}{\mathop{\rm Im }}
\renewcommand{\Re}{\mathop{\rm Re }}
\newcommand{\ra}{\mathop{\rightarrow }}
\newcommand{\supp}{\mathop{\rm supp}}
\newcommand{\sgn}{\mathop{\rm sgn }}
\newcommand{\card}{\mathop{\rm card }}
\newcommand{\KM}{\mbox{\rm KM}}
\newcommand{\diam}{\mathop{\rm diam}}
\newcommand{\diag}{\mathop{\rm diag}}
\newcommand{\tr}{\mathop{\rm tr}}
\newcommand{\Tr}{\mathop{\rm Tr}}
\newcommand{\dd}{\mathop{\rm d}}
\newcommand{\id}{\mbox{\rm1\hspace{-.2ex}\rule{.1ex}{1.44ex}}
   \hspace{-.82ex}\rule[-.01ex]{1.07ex}{.1ex}\hspace{.2ex}}
\renewcommand{\P}{\mathop{\rm Prob}}
\newcommand{\V}{\mathop{\rm Var}}
\newcommand{\cps}{{\stackrel{{\rm p.s.}}{\longrightarrow}}}
\newcommand{\limm}{\mathop{\rm l.i.m.}}
\newcommand{\cloi}{{\stackrel{{\rm loi}}{\rightarrow}}}
\newcommand{\bra}{\langle\,}
\newcommand{\ket}{\,\rangle}
\newcommand{\obl}{/\!/}
\newcommand{\mapdown}[1]{\vbox{\vskip 4.25pt\hbox{\bigg\downarrow
  \rlap{$\vcenter{\hbox{$#1$}}$}}\vskip 1pt}}
\newcommand{\tab}{&\!\!\!}

\newcommand{\tabb}{&\!\!\!\!\!}
\renewcommand{\d}{\displaystyle}
\newcommand{\epreuve}{\hspace{\fill}\qed}
\newcommand{\demo}{{\par\noindent{\em D\'emonstration~:~}}}
\newcommand{\solu}{{\par\noindent{\em Solution~:~}}}
\newcommand{\NB}{{\par\noindent{\bf Remarque~:~}}}
\newcommand{\const}{{\rm const}}
\newcommand{\cA}{{\cal A}}
\newcommand{\cB}{{\cal B}}
\newcommand{\cC}{{\cal C}}
\newcommand{\cD}{{\cal D}}
\newcommand{\cE}{{\cal E}}
\newcommand{\cF}{{\cal F}}
\newcommand{\cG}{{\cal G}}
\newcommand{\cH}{{\cal H}}
\newcommand{\cI}{{\cal I}}
\newcommand{\cJ}{{\cal J}}
\newcommand{\cK}{{\cal K}}
\newcommand{\cL}{{\cal L}}
\newcommand{\cM}{{\cal M}}
\newcommand{\cN}{{\cal N}}
\newcommand{\cO}{{\cal O}}
\newcommand{\cP}{{\cal P}}
\newcommand{\cQ}{{\cal Q}}
\newcommand{\cR}{{\cal R}}
\newcommand{\cS}{{\cal S}}
\newcommand{\cT}{{\cal T}}
\newcommand{\cU}{{\cal U}}
\newcommand{\cV}{{\cal V}}
\newcommand{\cW}{{\cal W}}
\newcommand{\cX}{{\cal X}}
\newcommand{\cY}{{\cal Y}}
\newcommand{\cZ}{{\cal Z}}
\newcommand{\bA}{{\bf A}}
\newcommand{\bB}{{\bf B}}
\newcommand{\bC}{{\bf C}}
\newcommand{\bD}{{\bf D}}
\newcommand{\bE}{{\bf E}}
\newcommand{\bF}{{\bf F}}
\newcommand{\bG}{{\bf G}}
\newcommand{\bH}{{\bf H}}
\newcommand{\bI}{{\bf I}}
\newcommand{\bJ}{{\bf J}}
\newcommand{\bK}{{\bf K}}
\newcommand{\bL}{{\bf L}}
\newcommand{\bM}{{\bf M}}
\newcommand{\bN}{{\bf N}}
\newcommand{\bP}{{\bf P}}
\newcommand{\bQ}{{\bf Q}}
\newcommand{\bR}{{\bf R}}
\newcommand{\bS}{{\bf S}}
\newcommand{\bT}{{\bf T}}
\newcommand{\bU}{{\bf U}}
\newcommand{\bV}{{\bf V}}
\newcommand{\bW}{{\bf W}}
\newcommand{\bX}{{\bf X}}
\newcommand{\bY}{{\bf Y}}
\newcommand{\bZ}{{\bf Z}}
\newcommand{\bu}{{\bf u}}
\newcommand{\bv}{{\bf v}}
\newfont{\msbm}{msbm10 scaled\magstep1}
\newfont{\msbms}{msbm7 scaled\magstep1} 
\newcommand{\bbA}{\mbox{$\mbox{\msbm A}$}}
\newcommand{\bbB}{\mbox{$\mbox{\msbm B}$}}
\newcommand{\bbC}{\mbox{$\mbox{\msbm C}$}}
\newcommand{\bbD}{\mbox{$\mbox{\msbm D}$}}
\newcommand{\bbE}{\mathbb{E}}
\newcommand{\bbF}{\mbox{$\mbox{\msbm F}$}}
\newcommand{\bbG}{\mbox{$\mbox{\msbm G}$}}
\newcommand{\bbH}{\mbox{$\mbox{\msbm H}$}}
\newcommand{\bbI}{\mbox{$\mbox{\msbm I}$}}
\newcommand{\bbJ}{\mbox{$\mbox{\msbm J}$}}
\newcommand{\bbK}{\mbox{$\mbox{\msbm K}$}}
\newcommand{\bbL}{\mbox{$\mbox{\msbm L}$}}
\newcommand{\bbM}{\mbox{$\mbox{\msbm M}$}}
\newcommand{\bbN}{\mathbb{N}}
\newcommand{\bbO}{\mbox{$\mbox{\msbm O}$}}
\newcommand{\bbP}{\mathbb{P}}
\newcommand{\bbQ}{\mbox{$\mbox{\msbm Q}$}}
\newcommand{\bbR}{\mathbb{R}}
\newcommand{\bbS}{\mbox{$\mbox{\msbm S}$}}
\newcommand{\bbT}{\mbox{$\mbox{\msbm T}$}}
\newcommand{\bbU}{\mbox{$\mbox{\msbm U}$}}
\newcommand{\bbV}{\mbox{$\mbox{\msbm V}$}}
\newcommand{\bbW}{\mbox{$\mbox{\msbm W}$}}
\newcommand{\bbX}{\mbox{$\mbox{\msbm X}$}}
\newcommand{\bbY}{\mbox{$\mbox{\msbm Y}$}}
\newcommand{\bbZ}{mathbb{Z}}
\newcommand{\bbsA}{\mbox{$\mbox{\msbms A}$}}
\newcommand{\bbsB}{\mbox{$\mbox{\msbms B}$}}
\newcommand{\bbsC}{\mbox{$\mbox{\msbms C}$}}
\newcommand{\bbsD}{\mbox{$\mbox{\msbms D}$}}
\newcommand{\bbsE}{\mbox{$\mbox{\msbms E}$}}
\newcommand{\bbsF}{\mbox{$\mbox{\msbms F}$}}
\newcommand{\bbsG}{\mbox{$\mbox{\msbms G}$}}
\newcommand{\bbsH}{\mbox{$\mbox{\msbms H}$}}
\newcommand{\bbsI}{\mbox{$\mbox{\msbms I}$}}
\newcommand{\bbsJ}{\mbox{$\mbox{\msbms J}$}}
\newcommand{\bbsK}{\mbox{$\mbox{\msbms K}$}}
\newcommand{\bbsL}{\mbox{$\mbox{\msbms L}$}}
\newcommand{\bbsM}{\mbox{$\mbox{\msbms M}$}}
\newcommand{\bbsN}{\mbox{$\mbox{\msbms N}$}}
\newcommand{\bbsO}{\mbox{$\mbox{\msbms O}$}}
\newcommand{\bbsP}{\mbox{$\mbox{\msbms P}$}}
\newcommand{\bbsQ}{\mbox{$\mbox{\msbms Q}$}}
\newcommand{\bbsR}{\mbox{$\mbox{\msbms R}$}}
\newcommand{\bbsS}{\mbox{$\mbox{\msbms S}$}}
\newcommand{\bbsT}{\mbox{$\mbox{\msbms T}$}}
\newcommand{\bbsU}{\mbox{$\mbox{\msbms U}$}}
\newcommand{\bbsV}{\mbox{$\mbox{\msbms V}$}}
\newcommand{\bbsW}{\mbox{$\mbox{\msbms W}$}}
\newcommand{\bbsX}{\mbox{$\mbox{\msbms X}$}}
\newcommand{\bbsY}{\mbox{$\mbox{\msbms Y}$}}
\newcommand{\bbsZ}{\mbox{$\mbox{\msbms Z}$}}

%
\def\eurtoday{\number\day \space\ifcase\month\or
 January\or February\or March\or April\or May\or June\or
 July\or August\or September\or October\or November\or December\fi
 \space\number\year}
%
%
\def\aujourdhui{\number\day \space\ifcase\month\or
 janvier\or f{\'e}vrier\or mars\or avril\or mai\or juin\or
 juillet\or ao{\^u}t\or septembre\or octobre\or novembre\or d{\'e}cembre\fi
 \space\number\year}
\textheight=21cm
\textwidth=16cm 
\voffset=-1cm
\newtheorem{theo}{Theorem}[section]
\newtheorem{pr}{Proposition}[section]
\newtheorem{cor}{Corollary}[section]
\newtheorem{lem}{Lemma}[section]
\newtheorem{defn}{Definition}[section]
\hoffset=-1,5cm
\parskip=4mm
\title{On the local time of random walks associated with Gegenbauer polynomials}
\author{Nadine Guillotin-Plantard \thanks{Universit\'e de Lyon ; Universit\'e Lyon 1 ; INSA de Lyon, F-69621 ; Ecole Centrale de Lyon ; CNRS, UMR5208,
Institut Camille Jordan, 43 bld du 11 novembre 1918, F-69622 Villeurbanne-Cedex, France, e-mail: nadine.guillotin@univ-lyon1.fr}}
\date{}
\maketitle
~\\
{\it Key words:} random walk, local time, local limit theorem, Gegenbauer polynomials, Markov chain, transition kernel, recurrence, transience, birth and death process. \\
~\\
{\it AMS Subject Classification:} 60J10, 60J55, 60F05, 60B99.

\begin{abstract}
The local time of random walks associated with Gegenbauer polynomials $P_n^{(\alpha)}(x),\ x\in [-1,1]$ is studied in the recurrent case: $\alpha\in\ [-\frac{1}{2},0]$.   
When $\alpha$ is nonzero, the limit distribution is given in terms of a Mittag-Leffler distribution. 
The proof is based on a local limit theorem for the random walk associated with Gegenbauer polynomials. As a by-product, we derive the limit distribution of the local time of some particular birth and death Markov chains on $\bbN$.
\end{abstract}

\section{Introduction}
Random walks on hypergroups have been extensively studied over the last decades. A history of these  processes as well as the motivations for studying them are provided in \cite{Galhist}. We here restrict ourselves to discrete polynomial hypergroups~: Let $(\alpha_n)_{n\in\bbsN}$, $(\beta_n)_{n\in\bbsN}$ and $(\gamma_n)_{n\in\bbsN}$ be real sequences with the following properties: $\gamma_n > 0$,
$\beta_n \geq 0, \alpha_{n+1} > 0$ for all $n \in \bbN$, moreover $\alpha_0= 0$, and $\alpha_n + \beta_n + \gamma_n = 1$ for all $n \in \bbN$. We
define the sequence of polynomials $(P_n)_{n\in\bbsN}$ by $P_0(x) = 1$, $P_1(x) = x$, and by the recursive
formula
$$xP_n(x) = \alpha_n P_{n-1}(x)+\beta_n P_n(x)+\gamma_n P_{n+1}(x)$$
for all $n \geq 1$ and $x \in \bbR$. In this case there exist constants $c(n,m,k), n, m, k \in \bbN$ such that the following {\it linearization formula}
$$P_nP_m =\sum_{k=|n-m|}^{n+m} c(n,m,k) P_k$$
holds for all $n, m \in \bbN$. Since $P_n(1) = 1$ for all $n\in \bbN$, we have for all $n, m \in \bbN$,
$$\sum_{k=|n-m|}^{n+m} c(n,m,k) = 1.$$
If the coefficients $c(n,m,k)$ are nonnegative for all $n, m, k
\in \bbN$, then a hypergroup structure on $\bbN$ is obtained from the generalized convolution $\star$ defined as follows: for all $n, m \in \bbN,$
$$\delta_n \star \delta_m =\sum_{k=|n-m|}^{n+m} c(n,m,k)\delta_k.$$
The resulting hypergroup $K = (\bbN,\star)$ is called {\it the discrete polynomial hypergroup associated with the sequence
$(P_n)_{n\in\bbsN}.$} Many classical families of orthogonal polynomials with respect to some positive measure on $[-1,1]$ satisfy a linearization formula with nonnegative coefficients. 
A random walk with distribution $\mu\in {\cal M}_1(\bbN)$ on the hypergroup $(\bbN,\star)$ is then defined as a homogeneous Markov chain on $\bbN$ with Markov kernel given by
$$p(x,y)=\delta_x\star\mu(y),\ \ x,y\in\bbN.$$
This Markov chain is called {\it random walk associated with the sequence of polynomials  $(P_n)_{n\in\bbsN}$}.\\* 
Limit theorems (law of large numbers, central limit theorem, local limit theorems, large deviation principle, iterated logarithm law,...) for these processes were earlier investigated by M. Ehring \cite{Her, Her2}, M. Voit \cite{Voi1, Voi2, Voi3, Voi4}.  
L. Gallardo {\it et al} (\cite{Gal1, Gal}), Y. Guivarc'h {\it et al} \cite{gui}, M. Mabrouki \cite{Mab} have more specifically studied limit theorems for random walks associated with Gegenbauer polynomials.
In this paper we present some extensions of the theory developed by these authors by deriving a limit theorem for the local time of the random walks associated with Gegenbauer polynomials $(P_n^{(\alpha)})_{n\in\bbsN}$ for every $\alpha\in [-1/2,0]$.

The organization of the paper is as follows: We recall in Section 2 some generalities on Bessel processes and its local time. In Section 3, Gegenbauer polynomials as well as the definition of random walks associated with these polynomials are given. In Section 4, classical limit theorems for these processes are presented. Section 5 is devoted to the study of the local time of these Markov chains and some particular cases are considered.  

\section{Preliminaries on the Bessel process and its local time}
For every $\alpha\in\, [-1,+\infty[$, we will denote by $B^{(\alpha)}:=(B_{t}^{(\alpha)})_{t\in\bbsR_{+}}$ the unique solution of the stochastic differential equation
$$ X_t^2=X_0^2+2\int_0^t\sqrt{X_s^2}\ \mbox{\rm d}B_s + 2(\alpha +1)\, t$$
where  $(B_{t})_{t\in\bbsR_{+}}$ is the real Brownian motion. 
The parameter $\alpha$ is usually called the {\it index} of the Bessel process $B^{(\alpha)}$.  
The process $(B_{t}^{(\alpha)})_{t\in\bbsR_{+}}$ can also be defined as the $\bbR_{+}$-valued Feller diffusion whose infinitesimal generator ${\cal L}$ is defined as:
$${\cal L}f=\frac{1}{2}\frac{\rm{d}^2 f}{\rm{d}x^2} +\frac{2\alpha+1}{2 x} \frac{\rm{d}f}{\rm{d}x}$$
on the domain
$${\cal D}({\cal L})=\{f:\bbR_{+}\rightarrow\bbR;\ {\cal L}f\in C_b(\bbR_{+}),\ \lim_{x\downarrow 0} x^{2\alpha +1} f'(x)=0\}.$$
The Bessel process has the Brownian scaling property: for every $c>0$, the processes $(B_{ct}^{(\alpha)})_{t\in\bbsR_{+}}$ and $(\sqrt{c}B_t^{(\alpha)})_{t\in\bbsR_{+}}$ have the same law, when $B_0^{(\alpha)}\equiv 0$.

Let us fix $\alpha\in\ ]-1,0[$, it is well-known (see \cite{RY}) that there exists a jointly continuous family $(L_{t}^{(\alpha)}(x))_{x\in\bbsR_{+}, t \in\bbsR_{+}}$ of local times such that the occupation formula:
$$\int_0^t h( B_s^{(\alpha)})\ \mbox{\rm d}s =2\int_0^\infty  h(x)L_t^{(\alpha)}(x) x^{2\alpha+1}\ \mbox{\rm d}x$$
holds for every Borel function $h : \bbR_{+} \rightarrow \bbR_{+}.$
We could also take as a definition of $L_{t}^{(\alpha)}(0)$ the unique continuous increasing process such that :
$$(B_{t}^{(\alpha)})^{2 |\alpha|} - 2|\alpha|L_{t}^{(\alpha)}(0),\,  t  \geq 0, $$
 be a martingale. \\*
For every $\alpha>0$, the unique distribution with Laplace transform given by the Mittag-Leffler function (see \cite{FE} p. 453)
$$E_{\alpha}(x)= \sum_{p=0}^{\infty} \frac{(-x)^p}{\Gamma(p\alpha+1)},\ \ x\in\bbR_{+}$$
is called {\it Mittag-Leffler distribution} and denoted by ${\cal M}(\alpha)$. Here, $\Gamma(.)$ denotes the usual Gamma function. The $p-$th moment of ${\cal M}(\alpha)$ is equal to
$$\frac{p!}{\Gamma(\alpha p+1)}.$$
The probability density of ${\cal M}(\alpha)$ (see \cite{Man}) is equal to
$$f(x)= \frac{1}{\pi} \sum_{k=1}^{\infty} \frac{(-1)^{k-1}}{(k-1)!}\sin(\pi k\alpha)\Gamma(k\alpha) x^{k-1}, x\in \ ]0,+\infty[.$$
In \cite{Gra} (p. 567, Formula (R8)), the distribution of the random variable $L_1^{(\alpha)}(0)$ is given in terms of the Mittag-Leffler distribution, namely
\begin{equation}\label{loc}
\frac{2^{\alpha+1} \Gamma(\alpha+1)}{\Gamma(|\alpha|)}L_1^{(\alpha)}(0)\stackrel{\rm law}{=}{\cal M}(|\alpha|).
\end{equation}
\section{Random walk associated with Gegenbauer polynomials}
\subsection{Generalities on Gegenbauer Polynomials} 
{\it Gegenbauer polynomials} also called {\it ultraspherical polynomials} are defined on $[-1,1]$ for any $\alpha>-1$ by
$$P_n^{(\alpha)}(x) = \frac{(-1)^n}{2^n (\alpha+1)\ldots(\alpha+n)}(1-x^2)^{-\alpha} \frac{\mbox{\rm d}^{n}}{\mbox{\rm d}x^n} (1-x^2)^{n+\alpha}.$$ 
They satisfy the following orthogonality relations~:
\begin{equation}\label{ort}
\int_{-1}^1 P_n^{(\alpha)}(x)P_m^{(\alpha)}(x)\, \mbox{\rm d}\pi_{\alpha}(x)=\left\{\begin{array}{lll}
0 & if & n\neq m\\
(w_n^{(\alpha)})^{-1} & if & n=m
\end{array}
\right.
\end{equation}
where $ \mbox{\rm d}\pi_{\alpha}(x) = (1-x^2)^{\alpha} {\bf 1}_{[-1,1]}(x)\ \mbox{\rm d}x$
and \\
$-\ n\neq 0$:
\begin{equation}\label{ww}
w_n^{(\alpha)} = \frac{(2n+2\alpha+1) \Gamma(n+2\alpha+1)}{2^{2\alpha+1} \Gamma(n+1)
\Gamma(\alpha +1)^2}
\end{equation}
$-\  n=0$:
\begin{equation}\label{www}
w_{0}^{(\alpha)}=\left\{\begin{array}{ll}
\frac{(2\alpha+1) \Gamma(2\alpha+1)}{2^{2\alpha+1} \Gamma(\alpha+1)^2}& \mbox{\rm if} \ \alpha\neq -1/2\\
\ \ \ \ \ \ \ \ 1/\pi & \mbox{\rm otherwise} 
\end{array}
\right.
\end{equation}
These polynomials satisfy the following properties~:
\begin{eqnarray}\label{par}
P_0^{(\alpha)}(x)&\equiv& 1,\ \ P_1^{(\alpha)}(x)\equiv x\nonumber \\
P_n^{(\alpha)}(-x)&=&(-1)^n P_n^{(\alpha)}(x)\\
P_n^{(\alpha)} (1)&=& 1 \nonumber
\end{eqnarray}
and the multiplication formula
\begin{equation}
P_1^{(\alpha)}(x)P_n^{(\alpha)}(x) = \frac{n}{2n+2\alpha+1} P_{n-1}^{(\alpha)}(x) 
+\frac{n+2\alpha+1}{2n+2\alpha+1} P_{n+1}^{(\alpha)}(x)
\end{equation}
for every $\alpha>-1$, every $n\in\bbN^{\star}$ and every $x\in\ [-1,1]$.\\*
More generally, when $\alpha\geq -\frac{1}{2},$ we have, for $m\leq n$,
\begin{equation}\label{CG}
P_m^{(\alpha)}(x)P_n^{(\alpha)}(x) = \sum_{r=0}^m C^{(\alpha)}(m,n,r) P_{n-m+2r}^{(\alpha)}(x)
\end{equation}
where the nonnegative coefficients $C^{(\alpha)}(m,n,r)$ are explicit (see \cite{Gal1}).\\*
Finally, when $\alpha>-\frac{1}{2}$, for any $n\geq 1$,
$$|P_n^{(\alpha)}(x)|<1,\quad  \forall x\in\ ]-1,+1[.$$

\subsection{Random walk associated with Gegenbauer polynomials}
Let ${\cal M}_1 (\bbN)$ be the set of probability measures $\mu = \sum_{n\in \bbsN}\mu(n)\delta_n $ on $\bbN$. 
Thanks to Formula $(\ref{CG})$, for every $\alpha\in [-1/2,+\infty[$,
we can define a generalized convolution denoted $\star$ as follows: 
$$ \delta_m\star\delta_n=\sum_{r}C^{(\alpha)}(m,n,r)\delta_{n-m+2r}$$
and more generally, if $\mu$, $\nu$ are in ${\cal M}_1(\bbN)$:
$$ \mu\star\nu = \sum_{m,n\in\bbsN}\mu(n)\nu(m)\delta_n\star\delta_m.$$
For each $x\in \bbN$ and for each subset $A$ of $\bbN$, we can define the transition kernel from $\bbN$ to $\bbN$~:
$$P(x,A)=(\delta_{x}\star\mu) (A).$$
{\it The random walk associated with Gegenbauer polynomials} is defined as the Markov chain with state space $\bbN$ and transition kernel given by $P$ and will be denoted by $(S_n)_{n\geq 0}$. In the sequel, for the sake of clarity, we will omit in the notation of the Markov chain the index $\alpha$ and the measure $\mu$ from which the process is defined.\\* 
The probability to be in a subset $A$ of $\bbN$ at time $n$ when departing from state $x$ at time $0$ is then given by 
$$ P^{(n)}(x,A) = (\delta_x\star{\mu}^{(n)})(A)$$
with the notation $ {\mu}^{(n)} = \mu\star\ldots\star\mu$ ($n$ times).\\*
It is worth noticing that if the distribution $\mu$ is the Dirac mass at point 1, then the random walk associated with Gegenbauer polynomials with index $\alpha$ is the birth and death Markov chain on $\bbN$ with transition probabilities given by 
$$p(0,1)=1 $$
and
$$p(i,i+1)=1-p(i,i-1)=\frac{1}{2}\Big(1+\frac{\lambda}{i+\lambda}\Big)$$
where $\lambda=\alpha+\frac{1}{2}\in [0,+\infty[$.\\
A natural question is to know if a given Markov chain with state space $\bbN$ corresponds to a random walk associated with Gegenbauer polynomials. This is true if and only if the transition probabilities of the Markov chain satisfy the following relation (see \cite{Gal1})
\begin{eqnarray*}
\frac{i}{2(i+\lambda)}p(i-1,j)+\frac{i+2\lambda}{2(i+\lambda)}p(i+1,j)&=&\frac{j+2\lambda-1}{2(j+\lambda-1)}p(i,j-1)\\
&+&\frac{j+1}{2(j+\lambda+1)}p(i,j+1)
\end{eqnarray*}
for some $\lambda\in  [0,\frac{1}{2}]$.\\* 
The distribution $\mu$ is then given by $\mu(n)=p(0,n)$.\\*
Finally, we call {\it (generalized) Fourier transform} of $\mu\in {\cal M}_1(\bbN)$ the function $\hat{\mu}$ defined on $[0,\pi]$ by
$$\hat{\mu}(\theta)=\sum_{n\in\bbsN}\mu(n) P_{n}^{(\alpha)}(\cos(\theta)).$$
From orthogonality relations (\ref{ort}), the coefficient $\mu(n)$ of the measure $\mu$ can be obtained from $\hat{\mu}$ by the following formula
\begin{eqnarray}\label{four}
 \mu(n) &=& w_n^{(\alpha)} \int_0^{\pi} \hat{\mu}(\theta) P_n^{(\alpha)}(\cos(\theta))\, \sin^{2\alpha+1}(\theta)\, \mbox{\rm d}\theta.
\end{eqnarray}
In particular, $\hat{\delta}_n(\theta)=P_n^{(\alpha)}(\cos(\theta)) $ and thanks to Formula (\ref{CG}),
$$\widehat{(\delta_n\star \delta_m)}= \hat{\delta}_n\hat{\delta}_m.$$
More generally, for every $\mu,\nu\in{\cal M}_1(\bbN)$,
$$  \widehat{(\mu \star \nu)} =  \widehat{\mu}\  \widehat{\nu}.$$

\section{Limit theorems}
In this section, we denote by $(S_n)_{n\geq 0}$ the random walk associated with Gegenbauer polynomials (as defined in Section 3.2) with transition kernel given by $\delta_x\star \mu$ for some $\mu\in {\cal M}_1(\bbN)$. 
\subsection{A functional central limit theorem}
Let ${\cal D}={\cal D}([0,+\infty[)$ be the space of c\`adl\`ag functions on $\bbR_{+}$ endowed with the Skorohod topology. We denote by $(B_t^{(\alpha)})_{t\in\bbsR_{+}}$ the Bessel process on $\bbR_{+}$ of index $\alpha \in [-\frac{1}{2},+\infty[$ defined in Section 2. 
\begin{theo}\cite{Mab}
Let $\mu\in {\cal M}_1(\bbN)$ with a second order moment and
 $$C=\frac{1}{4(\alpha +1)}\sum_{n=1}^{\infty} \mu(n) n(n+2\alpha+1).$$
The sequence $ \Big(\frac{S_{[nt]}}{\sqrt{2Cn}}\Big)_{t\in\bbsR_{+}} $
converges in ${\cal D}$, as $n\rightarrow +\infty$, to the process $(B_t^{(\alpha)})_{t\in\bbsR_{+}}$.
\end{theo}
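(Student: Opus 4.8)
The plan is to prove the invariance principle in the classical two steps — convergence of the finite-dimensional distributions and tightness in ${\cal D}$ — and then to identify every subsequential limit with the Bessel process through the martingale/semigroup characterisation recalled in Section 2. The engine of the first step is the generalized Fourier transform of Section 3.2, whose multiplicativity $\widehat{(\mu\star\nu)}=\hat\mu\,\hat\nu$ turns the convolution powers $\mu^{(k)}$ into ordinary powers of $\hat\mu$, together with the Mehler--Heine asymptotics linking normalized Gegenbauer polynomials to Bessel functions. Throughout I take $S_0=0$, so that the law of $S_k$ is $\mu^{(k)}$, and I write $X^n_t:=S_{[nt]}/\sqrt{2Cn}$ for the rescaled chain.

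First I would compute the one-time marginal by evaluating a character. Since $\hat\delta_n=P_n^{(\alpha)}(\cos\cdot)$ and convolution transforms into products, $\bbE\big[P_{S_k}^{(\alpha)}(\cos\theta)\big]=\hat\mu(\theta)^k$ for every $\theta\in[0,\pi]$. The constant $C$ emerges from a second-order expansion of $\hat\mu$ at $\theta=0$: writing $y=P_n^{(\alpha)}$, the Gegenbauer equation $(1-x^2)y''-(2\alpha+2)xy'+n(n+2\alpha+1)y=0$ evaluated at $x=1$ with $y(1)=1$ gives $y'(1)=\frac{n(n+2\alpha+1)}{2\alpha+2}$, and since $\cos\theta-1\sim-\theta^2/2$,
\[
P_n^{(\alpha)}(\cos\theta)=1-\frac{n(n+2\alpha+1)}{4(\alpha+1)}\,\theta^2+o(\theta^2);
\]
summing against $\mu$, with the second moment of $\mu$ justifying term-by-term control, yields $\hat\mu(\theta)=1-C\theta^2+o(\theta^2)$. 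Matching the frequency to the spatial scaling by setting $\theta=\lambda/\sqrt{2Cn}$ gives $C\theta^2=\lambda^2/(2n)$, whence $\hat\mu(\theta)^{[nt]}\to e^{-\lambda^2 t/2}$. On the other hand $P_{S_{[nt]}}^{(\alpha)}(\cos\theta)$ is a normalized Gegenbauer polynomial of degree $N=S_{[nt]}$ at argument $\cos(\theta)$ with $N\theta=\lambda\,S_{[nt]}/\sqrt{2Cn}$, so Mehler--Heine gives the pointwise limit $\mathcal{J}_\alpha(\lambda\,X^n_t)$, where $\mathcal{J}_\alpha$ is the normalized Bessel function (the character of the Bessel--Kingman hypergroup, with $\mathcal{J}_{-1/2}(z)=\cos z$). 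The uniform bound $|P_n^{(\alpha)}|\le1$ recalled at the end of Section 3.1 (and $|\cos n\theta|\le1$ at $\alpha=-1/2$) legitimises dominated convergence, so any subsequential marginal limit $Z$ satisfies $\bbE[\mathcal{J}_\alpha(\lambda Z)]=e^{-\lambda^2 t/2}$. Because $\mathcal{J}_\alpha(\lambda\cdot)$ is the eigenfunction of ${\cal L}$ with eigenvalue $-\lambda^2/2$, this Hankel-transform identity is exactly the one satisfied by $B_t^{(\alpha)}$, pinning down the law of $Z$; the joint distributions at $t_1<\dots<t_k$ then follow from the Markov property, the convergence of the one-step transition operators being the iterated form of the same computation.

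For tightness I would invoke Aldous's criterion. The decisive input is that, from state $x$, the conditional law of the increment $S_{n+1}-S_n$ is supported within distance $m$ of $x$ on the event that the $\mu$-step equals $m$ (by the support of $\delta_x\star\delta_m$ read off from the linearization formula (\ref{CG})), so its conditional second moment is bounded by a constant times $\sum_m\mu(m)m^2<\infty$, uniformly in $x$. Summing over a window of length $[n\delta]$ gives, for every stopping time $\tau$ of $X^n$, a bound $\bbE\big[(X^n_{\tau+\delta}-X^n_\tau)^2\big]\le K\delta$ with $K$ independent of $n$; this supplies both the Aldous stochastic-equicontinuity estimate and control of the modulus of continuity, and combined with the marginal tightness already obtained yields relative compactness in ${\cal D}$.

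Finally I would identify the limit: each subsequential limit is a continuous $\bbR_+$-valued strong Markov process (continuity because the rescaled jumps vanish uniformly) whose transition semigroup was characterised above, equivalently whose generator is ${\cal L}$ together with the boundary condition $\lim_{x\downarrow0}x^{2\alpha+1}f'(x)=0$; for $\alpha\ge-1/2$ this martingale problem is well posed and its unique solution is $B^{(\alpha)}$, giving convergence of the whole sequence. The step I expect to be most delicate is precisely the identification near the origin: the generator carries the singular drift $(2\alpha+1)/(2x)$, so one must verify that the rescaled walk spends no anomalous time at $0$ and that the limiting boundary behaviour is the correct instantaneously reflecting one — this is where the restriction $\alpha\ge-1/2$ is genuinely used. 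The Mehler--Heine passage, by contrast, is rendered routine by the uniform bound $|P_n^{(\alpha)}|\le1$.
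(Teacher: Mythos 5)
First, a point of comparison: the paper does not prove this theorem at all --- it is quoted from Mabrouki \cite{Mab} --- so your argument cannot be measured against an internal proof. Judged on its own, your strategy (hypergroup Fourier transform for the finite-dimensional distributions, Mehler--Heine asymptotics to produce the normalized Bessel characters, Aldous's criterion for tightness) is the standard and correct route, and the marginal computation is sound: the expansion $\hat\mu(\theta)=1-C\theta^2+o(\theta^2)$ obtained from $y'(1)=n(n+2\alpha+1)/(2\alpha+2)$ is exactly what makes the constant $C$ appear, and the identity $\bbE\big[P_{S_k}^{(\alpha)}(\cos\theta)\big]=\hat\mu(\theta)^k$ together with the Mehler--Heine limit (Szeg\H{o} 8.21.12, the same formula the paper invokes in Theorem 4.4) identifies the marginals and, by iteration through the transition kernels $\delta_x\star\mu^{(k)}$, the finite-dimensional distributions of $B^{(\alpha)}$. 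One caveat there: what you need is not dominated convergence but uniformity of Mehler--Heine in the degree $N$ for $N\theta$ in compacts, since the degree $S_{[nt]}$ is random.

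The genuine gap is in the tightness step. You bound the conditional second moment of a single increment uniformly in the current state and then assert that ``summing over a window of length $[n\delta]$'' yields $\bbE\big[(X^n_{\tau+\delta}-X^n_\tau)^2\big]\le K\delta$. That inference is false for a non-martingale: the increments of $(S_k)$ are neither independent nor orthogonal (the chain carries a positive drift of order $1/(1+S_k)$, which is precisely what produces the Bessel drift $(2\alpha+1)/(2x)$ in the limit), and summing per-step second moments controls $\sum_j\bbE[(S_{j+1}-S_j)^2]$, not $\bbE[(S_{\tau+N}-S_\tau)^2]$; a deterministic unit-speed walk satisfies your per-step bound yet has squared displacement $N^2$ over $N$ steps. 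The missing ingredient is the second moment function of the hypergroup: because the coefficient of $\theta^2$ in $\hat{\delta}_n$ is additive under $\star$, the function $\phi(n)=n(n+2\alpha+1)$ satisfies $\bbE[\phi(S_{k+1})\mid S_k]=\phi(S_k)+4C(\alpha+1)$, so that $\phi(S_k)-4C(\alpha+1)k$ is a martingale; combining optional stopping for this martingale with the submartingale property of $(S_k)$ itself (nonnegative drift for $\alpha\ge-\tfrac12$, which makes the cross term $-2\,\bbE[S_\tau(S_{\tau+N}-S_\tau)]$ nonpositive) gives $\bbE[(S_{\tau+N}-S_\tau)^2]\le\bbE[\phi(S_{\tau+N})-\phi(S_\tau)]=4C(\alpha+1)N$, which is the Aldous estimate you wanted. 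With that repair the proof closes; note also that once the finite-dimensional distributions are identified with those of $B^{(\alpha)}$, your concluding martingale-problem discussion is redundant, since f.d.d.\ convergence plus tightness already yields convergence in ${\cal D}$.
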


\subsection{A large deviation principle - A law of large numbers}
A large deviation principle for polynomial hypergroups was proved by Ehring \cite{Her}.
\begin{theo} \cite{Her}
Let $\mu\in {\cal M}_1(\bbN)$ with finite support.
Then, the sequence of random variables $(\frac{S_n}{n})_{n\geq 1}$ satisfies a large deviation principle of speed $n$ and good rate function 
$$I(x)=\left\{\begin{array}{lll}
               +\infty & if & x\notin [0,x_0] \\
\displaystyle\sup_{\lambda\geq 0}\{\lambda x - \log(\tilde{\mu}(\lambda))\} & if & x\in [0,x_0]
\end{array}
\right.$$ 
where $x_0=\max\{x\in\bbN\ |\ \mu(x)\neq 0\}$ and $\tilde{\mu}$ is the (generalized) Laplace transform of the measure $\mu$ (see \cite{Her} for the definition).
\end{theo}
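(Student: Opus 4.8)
The plan is to prove the statement by the Gärtner--Ellis theorem, exploiting the fact that the hypergroup transforms \emph{factorize} along the convolution $\star$, which is the exact analogue of the factorization of moment generating functions across independent increments. The role of the classical moment generating function will be played by a character-type sum built from the Gegenbauer polynomials evaluated \emph{off} $[-1,1]$, and the bulk of the work is to match the exponential growth rate of this sum with that of the genuine exponential moments of $S_n$.

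First I would set up the factorization. Just as $\hat\delta_n(\theta)=P_n^{(\alpha)}(\cos\theta)$ satisfies $\widehat{(\delta_n\star\delta_m)}=\hat\delta_n\hat\delta_m$, the linearization formula (\ref{CG}) is a polynomial identity valid at every real argument, so evaluating it at $x=\cosh\lambda>1$ shows that the generalized Laplace transform $\tilde{\mu}(\lambda)=\sum_n\mu(n)P_n^{(\alpha)}(\cosh\lambda)$ is again multiplicative: $\widetilde{(\mu\star\nu)}=\tilde{\mu}\,\tilde{\nu}$, hence $\widetilde{\mu^{(n)}}=(\tilde{\mu})^n$. Since the walk started at $0$ has law $\P(S_n=k)=\mu^{(n)}(k)$, this gives the exact identity
$$\sum_{k}\mu^{(n)}(k)\,P_k^{(\alpha)}(\cosh\lambda)=\big(\tilde{\mu}(\lambda)\big)^n.$$

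Next I would identify the limiting logarithmic moment generating function. Examining the three-term recursion, for $x=\cosh\lambda$ the associated characteristic equation has roots $e^{\pm|\lambda|}$, so $P_k^{(\alpha)}(\cosh\lambda)$ grows at exponential rate $|\lambda|$, i.e. $\frac{1}{k}\log P_k^{(\alpha)}(\cosh\lambda)\to|\lambda|$, with a harmless subexponential prefactor. Because $\mu$ has finite support, $0\le S_n\le n x_0$, so only finitely many ratios $k/n$ contribute and the comparison between the character sum and the exponential moment is uniform. For $\lambda\ge 0$ the growth rate $|\lambda|=\lambda$ matches $e^{k\lambda}$, while for $\lambda\le 0$ the nonnegativity $S_n\ge 0$ forces the exponential moment to grow at rate $0$; thus
$$\Lambda(\lambda):=\lim_{n\to\infty}\frac{1}{n}\log\bbE\big[e^{\lambda S_n}\big]=\begin{cases}\log\tilde{\mu}(\lambda) & \text{if } \lambda\ge 0,\\[1mm] 0 & \text{if } \lambda\le 0.\end{cases}$$
The two branches agree at $\lambda=0$ since $\tilde{\mu}(0)=\sum_n\mu(n)P_n^{(\alpha)}(1)=1$, which is why the statement only keeps $\sup_{\lambda\ge 0}$.

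Finally I would invoke Gärtner--Ellis. As $\mu$ has finite support, $\tilde{\mu}$ is a finite sum of the real-analytic functions $\lambda\mapsto P_n^{(\alpha)}(\cosh\lambda)$; since $\cosh$ is even, $\tilde{\mu}$ is even and $\tilde{\mu}'(0)=0$, so $\Lambda$ is finite, convex, and continuously differentiable on all of $\bbR$ (including $\lambda=0$, where both one-sided derivatives vanish). Essential smoothness is therefore automatic, and Gärtner--Ellis yields the full large deviation principle at speed $n$ with good rate function $I=\Lambda^\star$. Computing this Legendre transform, the $\lambda\le 0$ branch contributes nothing for $x\ge 0$, so $I(x)=\sup_{\lambda\ge 0}\{\lambda x-\log\tilde{\mu}(\lambda)\}$ on $[0,x_0]$; for $x<0$ it is $+\infty$ because $S_n\ge 0$, and for $x>x_0$ it is $+\infty$ because $\log\tilde{\mu}(\lambda)\sim x_0\lambda$ as $\lambda\to+\infty$, forcing the supremum to diverge. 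The main obstacle is the middle step: rigorously transferring the exponential rate from the hypergroup character sum to $\bbE[e^{\lambda S_n}]$, which demands sufficiently uniform asymptotics for $P_k^{(\alpha)}(\cosh\lambda)$ up to $k$ of order $n$ and careful treatment of the degenerate regime $\lambda\to 0$, where $P_k^{(\alpha)}(1)=1$.
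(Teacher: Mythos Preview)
The paper does not prove this theorem: it is quoted verbatim from Ehring \cite{Her} and no argument is given in the text, so there is no ``paper's own proof'' to compare against. That said, your outline via G\"artner--Ellis and the multiplicativity of the generalized Laplace transform $\tilde\mu(\lambda)=\sum_n\mu(n)P_n^{(\alpha)}(\cosh\lambda)$ is precisely the natural route and is, in substance, the strategy of \cite{Her}: the factorization $\widetilde{\mu^{(n)}}=(\tilde\mu)^n$ is the hypergroup analogue of independence, and the limiting three-term recursion (with $\alpha_n,\gamma_n\to\tfrac12$) does give characteristic roots $e^{\pm\lambda}$, hence $\tfrac1k\log P_k^{(\alpha)}(\cosh\lambda)\to|\lambda|$ with a polynomial prefactor, which is all you need to transfer the rate from the character sum to $\bbE_x[e^{\lambda S_n}]$ uniformly over $k\le nx_0$.

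One point is handled too quickly. For $\lambda<0$ you write that ``the nonnegativity $S_n\ge 0$ forces the exponential moment to grow at rate $0$'', but $S_n\ge 0$ only gives the upper bound $\bbE_x[e^{\lambda S_n}]\le 1$, i.e.\ $\Lambda(\lambda)\le 0$. To get $\Lambda(\lambda)=0$ you also need that $\bbP_x(S_n\in K)$ does not decay exponentially for some bounded set $K\subset\bbN$. This is immediate from Theorem~\ref{llt}, since $p^{(n)}(x,0)\asymp n^{-(\alpha+1)}$ is only polynomially small; alternatively the functional CLT gives tightness of $S_n/\sqrt n$. With this in hand, $\Lambda\equiv 0$ on $(-\infty,0]$, the two branches and their derivatives match at $\lambda=0$ (because $\tilde\mu$ is even and $\tilde\mu(0)=1$), $\Lambda$ is finite and $C^1$ on all of $\bbR$, and G\"artner--Ellis applies exactly as you describe.
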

In \cite{Her}, $x=0$ is proved to be the unique infimum point of the function $I$, then a weak law of large numbers for $(\frac{S_n}{n})_{n\geq 1}$ holds. 
\subsection{Local limit theorems}
By using a (generalized) Fourier calculus, a local limit theorem for the random walk associated with Gegenbauer polynomials for any $\alpha \geq -\frac{1}{2}$ was proved in \cite{Her2}.
\begin{theo}\label{llt}
Let us assume that $\mu$ is aperiodic (i.e. the support of $\mu$ is not a subset of $2\bbN$) with a finite second order moment.\\* 
Then, for every $x,y\in\bbN$, as $n\rightarrow+\infty$,
$$p^{(n)} (x,y) \sim  \frac{w_y^{(\alpha)}\ \Gamma(\alpha+1)}{2(Cn)^{\alpha+1}}$$
where 
$$C=\frac{1}{4(\alpha +1)}\sum_{n=1}^{\infty} \mu(n) n(n+2\alpha+1).$$
The random walk associated with Gegenbauer polynomials is then
$$\left\{\begin{array}{ll}
\mbox{\rm recurrent if} & \alpha\in\ [-\frac{1}{2},0].\\
\mbox{\rm transient if} & \alpha\in\ ]0,+\infty[.
\end{array}
\right.
$$
\end{theo}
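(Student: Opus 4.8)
The plan is to read off the $n$-step kernel from the generalized Fourier calculus of Section 3.2. Since $\widehat{\delta_x\star\mu^{(n)}}(\theta)=P_x^{(\alpha)}(\cos\theta)\,\hat{\mu}(\theta)^n$, the inversion formula (\ref{four}) applied to $p^{(n)}(x,\cdot)=\delta_x\star\mu^{(n)}$ gives the spectral representation
$$p^{(n)}(x,y)=w_y^{(\alpha)}\int_0^{\pi}P_x^{(\alpha)}(\cos\theta)\,P_y^{(\alpha)}(\cos\theta)\,\hat{\mu}(\theta)^n\,\sin^{2\alpha+1}(\theta)\,\mbox{\rm d}\theta .$$
Everything reduces to a Laplace-type analysis of this integral as $n\to+\infty$: because $\hat{\mu}(0)=\sum_n\mu(n)P_n^{(\alpha)}(1)=1$ and $|\hat{\mu}|<1$ elsewhere, the mass concentrates near $\theta=0$ and the polynomial rate will come from the edge behaviour of the weight $\sin^{2\alpha+1}(\theta)$.

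First I would dispose of the region bounded away from the origin. Aperiodicity guarantees that $\theta=0$ is the only point of $[0,\pi]$ where $|\hat{\mu}|$ attains the value $1$ — the only competitor is $\theta=\pi$, where $P_n^{(\alpha)}(-1)=(-1)^n$ by (\ref{par}) forces $\hat{\mu}(\pi)=\sum_n(-1)^n\mu(n)$, which aperiodicity keeps strictly inside the unit disc. Hence for each $\delta>0$ one has $q:=\sup_{[\delta,\pi]}|\hat{\mu}|<1$, and since $|P_x^{(\alpha)}P_y^{(\alpha)}|\leq 1$ and $\sin^{2\alpha+1}$ is integrable, the contribution of $[\delta,\pi]$ is $O(q^n)$, negligible against the expected order $n^{-(\alpha+1)}$.

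The heart of the matter is the local expansion of $\hat{\mu}$ at $0$. Up to the normalization $P_n^{(\alpha)}(1)=1$, $P_n^{(\alpha)}$ is the Jacobi polynomial with parameters $(\alpha,\alpha)$ and therefore solves $(1-x^2)y''-(2\alpha+2)x\,y'+n(n+2\alpha+1)y=0$; evaluating at $x=1$ gives $\left.\tfrac{\mbox{\rm d}}{\mbox{\rm d}x}P_n^{(\alpha)}(x)\right|_{x=1}=\tfrac{n(n+2\alpha+1)}{2(\alpha+1)}$, whence, writing $x=\cos\theta$,
$$P_n^{(\alpha)}(\cos\theta)=1-\frac{n(n+2\alpha+1)}{4(\alpha+1)}\,\theta^2+o(\theta^2)\qquad(\theta\to0).$$
Summing against $\mu$ and using the finite second moment to justify the term-by-term limit gives $\hat{\mu}(\theta)=1-C\theta^2+o(\theta^2)$ with precisely the constant $C$ of the statement. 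I would then set $\theta=t/\sqrt{Cn}$: on this scale $\hat{\mu}(\theta)^n\to e^{-t^2}$, the fixed-degree factors satisfy $P_x^{(\alpha)}(\cos\theta),\,P_y^{(\alpha)}(\cos\theta)\to1$, and $\sin^{2\alpha+1}(\theta)\,\mbox{\rm d}\theta=(Cn)^{-(\alpha+1)}\,t^{2\alpha+1}(1+o(1))\,\mbox{\rm d}t$. A dominated-convergence argument — using $0\leq\hat{\mu}(\theta)\leq e^{-c\theta^2}$ near $0$ to control $\hat{\mu}(\theta)^n$ — then yields
$$p^{(n)}(x,y)\sim w_y^{(\alpha)}\,(Cn)^{-(\alpha+1)}\int_0^{\infty}e^{-t^2}t^{2\alpha+1}\,\mbox{\rm d}t=\frac{w_y^{(\alpha)}\,\Gamma(\alpha+1)}{2(Cn)^{\alpha+1}},$$
because $\int_0^{\infty}e^{-t^2}t^{2\alpha+1}\,\mbox{\rm d}t=\tfrac12\Gamma(\alpha+1)$. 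The recurrence/transience dichotomy is then immediate from the criterion that an irreducible chain is recurrent iff $\sum_n p^{(n)}(x,x)=+\infty$: the asymptotics give $p^{(n)}(x,x)\asymp n^{-(\alpha+1)}$, and $\sum_n n^{-(\alpha+1)}$ diverges exactly when $\alpha+1\leq1$, i.e. $\alpha\in[-\tfrac12,0]$, and converges when $\alpha>0$.

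I expect the genuine obstacle to be the rigorous justification of the two interchanges of limit and integration/summation, both of which hinge on uniform-in-$n$ control of the Gegenbauer polynomials near the edge. Specifically, to take the limit under the integral and to differentiate the defining series of $\hat{\mu}$ term by term, one needs an estimate of the form $0\leq 1-P_n^{(\alpha)}(\cos\theta)\leq c\,n(n+2\alpha+1)\,\theta^2$, uniform in $n$ on a fixed neighbourhood of $0$; the finite second moment of $\mu$ then furnishes the integrable dominating function. Upgrading the pointwise expansion above to this uniform quadratic bound is the technical core, and is the only place where the second-moment hypothesis is essential.
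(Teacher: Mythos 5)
Your argument is correct and is essentially the paper's (i.e.\ Ehring's, to whose thesis the paper defers): generalized Fourier inversion via (\ref{four}), the expansion $\hat{\mu}(\theta)=1-C\theta^2+o(\theta^2)$ from the derivative of $P_n^{(\alpha)}$ at $1$, and a Laplace analysis at $\theta=0$ that is made rigorous by exactly the decomposition $I_0,\dots,I_4$ displayed in the paper's proof of Theorem \ref{t2}. One small caveat: to get $|\hat{\mu}(\pi)|<1$ you also need $\supp\mu\not\subset 2\bbN+1$ (otherwise $\hat{\mu}(\pi)=-1$, as for $\mu=\delta_1$ in Proposition \ref{apero}), so the paper's parenthetical definition of aperiodicity must be read as excluding that case too.
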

The properties of recurrence/transience of the Markov chain $(S_n)_{n\in\bbsN}$ were established in \cite{gui} by computing the potential kernel of the Markov chain. 
The previous statement can be extended to the case when the measure $\mu$ is not aperiodic. We are only interested in the case $\mu=\delta_1$, but generalization to any periodic measure can easily be done. 
\begin{pr}\label{apero}
Assume that $\mu=\delta_1$, then for any $x,y\in\bbN$, as $n\rightarrow+\infty$,
$$p^{(n)}(x,y) \sim \left\{\begin{array}{ll}
w_y^{(\alpha)} 2^{\alpha+1} \Gamma(\alpha+1) n^{-(\alpha+1)} &\mbox{when}\ \ n+x+y \mbox{ is even.} \\
\ \ \ 0 & \mbox{otherwise.}
 \end{array} 
 \right.
 $$
\end{pr}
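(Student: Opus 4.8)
The plan is to mimic the Fourier-analytic proof of Theorem \ref{llt}, the only genuinely new feature being the presence of a second critical point. Applying the inversion formula (\ref{four}) to the measure $\delta_x\star\mu^{(n)}$, whose coefficient at $y$ is exactly $p^{(n)}(x,y)$, and using $\widehat{(\mu\star\nu)}=\hat\mu\,\hat\nu$ together with $\hat\delta_x(\theta)=P_x^{(\alpha)}(\cos\theta)$, I would write
\[
p^{(n)}(x,y)=w_y^{(\alpha)}\int_0^\pi P_x^{(\alpha)}(\cos\theta)\,\hat\mu(\theta)^n\,P_y^{(\alpha)}(\cos\theta)\,\mbox{\rm d}\pi_{\alpha,\theta},\qquad \mbox{\rm d}\pi_{\alpha,\theta}=\sin^{2\alpha+1}(\theta)\,\mbox{\rm d}\theta .
\]
For $\mu=\delta_1$ one has $\hat\mu(\theta)=P_1^{(\alpha)}(\cos\theta)=\cos\theta$ (so that the constant of Theorem \ref{llt} is $C=\tfrac12$, matching $\log\cos\theta\sim-\tfrac12\theta^2$). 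The decisive difference with the aperiodic case is that $|\hat\mu(\theta)|=|\cos\theta|$ attains its maximum value $1$ not only at $\theta=0$ but also at $\theta=\pi$; these two points govern the asymptotics.

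First I would discard the bulk of the interval: on any $[\epsilon,\pi-\epsilon]$ we have $|\cos\theta|\le\cos\epsilon<1$, hence $|\hat\mu(\theta)|^n\le(\cos\epsilon)^n$ decays exponentially, whereas the endpoint contributions are of polynomial order $n^{-(\alpha+1)}$, so the middle part is negligible. Near $\theta=0$ I would apply Laplace's method using $\log\cos\theta=-\tfrac12\theta^2+O(\theta^4)$, so that $\cos^n\theta\approx e^{-n\theta^2/2}$, together with $P_x^{(\alpha)}(\cos\theta)\to P_x^{(\alpha)}(1)=1$, $P_y^{(\alpha)}(\cos\theta)\to 1$ and $\sin^{2\alpha+1}\theta\sim\theta^{2\alpha+1}$. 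After the substitution $u=\theta\sqrt n$ and dominated convergence this gives
\[
w_y^{(\alpha)}\,n^{-(\alpha+1)}\int_0^\infty e^{-u^2/2}u^{2\alpha+1}\,\mbox{\rm d}u=w_y^{(\alpha)}\,2^{\alpha}\,\Gamma(\alpha+1)\,n^{-(\alpha+1)},
\]
the Gaussian moment being $2^{\alpha}\Gamma(\alpha+1)$, the integral converging at $0$ because $\alpha>-1$.

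For the neighbourhood of $\theta=\pi$ I would set $\phi=\pi-\theta$ and invoke the parity relation (\ref{par}): $P_x^{(\alpha)}(\cos\theta)=(-1)^xP_x^{(\alpha)}(\cos\phi)$, $P_y^{(\alpha)}(\cos\theta)=(-1)^yP_y^{(\alpha)}(\cos\phi)$ and $\cos^n\theta=(-1)^n\cos^n\phi$. The integrand is then identical to the one near $\theta=0$ up to the global sign $(-1)^{x+y+n}$, so this neighbourhood contributes $(-1)^{x+y+n}w_y^{(\alpha)}2^{\alpha}\Gamma(\alpha+1)n^{-(\alpha+1)}$. Summing the two endpoint contributions yields
\[
p^{(n)}(x,y)\sim w_y^{(\alpha)}\,2^{\alpha}\,\Gamma(\alpha+1)\,n^{-(\alpha+1)}\bigl(1+(-1)^{x+y+n}\bigr),
\]
which equals $w_y^{(\alpha)}2^{\alpha+1}\Gamma(\alpha+1)n^{-(\alpha+1)}$ when $n+x+y$ is even and $0$ otherwise; the latter is forced anyway, since the chain is a birth-and-death chain of period $2$ and $p^{(n)}(x,y)$ vanishes identically when $n+x+y$ is odd. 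The main obstacle is purely technical: making the Laplace expansions uniform and rigorously bounding the error terms in each endpoint neighbourhood (controlling $\cos^n\theta$ against its Gaussian proxy and the factors $P_x^{(\alpha)},P_y^{(\alpha)}$ near their values at $\pm1$), exactly as in the proof of the aperiodic local limit theorem. The only genuinely new point is the identification and symmetric treatment of the second critical point at $\theta=\pi$, which produces the sign $(-1)^{x+y+n}$ and hence the factor $2$ on the even lattice.
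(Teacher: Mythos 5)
Your proposal is correct and follows exactly the route the paper indicates for this proposition: the paper gives no separate proof, only the remark that the Fourier-analytic proof of Theorem \ref{llt} extends to the periodic case $\mu=\delta_1$, and your argument is precisely that extension, with the second critical point of $\hat\mu(\theta)=\cos\theta$ at $\theta=\pi$ handled by the parity relation (\ref{par}) to produce the factor $1+(-1)^{n+x+y}$. The constant check ($C=\tfrac12$, Gaussian moment $2^{\alpha}\Gamma(\alpha+1)$, doubling on the even sublattice) is consistent with Theorem \ref{llt}.
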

We now prove a local limit theorem with a new normalization in space following the same lines as the proof of Theorem \ref{llt} in \cite{Her2}. Therefore, in the proof, we just stress on points which differ.  
\begin{theo}\label{t2}
Let us assume that $\mu$ is aperiodic with a second order moment.\\* 
Then, for every $x\in\bbR_{+}^{\star}$, as $n\rightarrow +\infty$,
\begin{equation}\label{eq:1}
\sqrt{n}\ p^{(n)} (\lfloor x\sqrt{n}\rfloor,\lfloor x\sqrt{n} \rfloor) \sim \frac{x}{2C} e^{-\frac{x^2}{2C}} I_{\alpha}\left(\frac{x^2}{2C}\right)
\end{equation}
and
\begin{equation}\label{eq:2}
\sqrt{n}\ p^{(n)} (0,\lfloor x\sqrt{n}\rfloor) \sim \frac{x^{2\alpha+1}e^{-\frac{x^2}{4C}}}{2^{2\alpha+1} C^{\alpha+1}\Gamma(\alpha+1)}
\end{equation}
where $I_{\alpha}$ is the modified Bessel function of index $\alpha$.
\end{theo}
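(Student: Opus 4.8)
The plan is to follow the Fourier-analytic scheme already used for Theorem \ref{llt}, modifying only the two places where the growing spatial argument intervenes. I would start from the inversion formula (\ref{four}) applied to the measure $\delta_x\star\mu^{(n)}$, whose Fourier transform is $P_x^{(\alpha)}(\cos\theta)\,\hat\mu(\theta)^n$. This gives the exact representation
$$p^{(n)}(x,y)=w_y^{(\alpha)}\int_0^\pi P_x^{(\alpha)}(\cos\theta)\,P_y^{(\alpha)}(\cos\theta)\,\hat\mu(\theta)^n\,\sin^{2\alpha+1}(\theta)\,\mbox{\rm d}\theta,$$
so that each assertion becomes the asymptotic evaluation of such an integral with $y=x=\lfloor x\sqrt n\rfloor$ (for (\ref{eq:1})) or $x=0,\ y=\lfloor x\sqrt n\rfloor$ (for (\ref{eq:2})).

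Three ingredients would then be assembled, all under the scaling $\theta=u/\sqrt n$. First, the behaviour of $\hat\mu$ at the origin: from the ultraspherical equation $y''+(2\alpha+1)\cot\theta\,y'+n(n+2\alpha+1)y=0$ satisfied by $\theta\mapsto P_n^{(\alpha)}(\cos\theta)$ one reads off $P_n^{(\alpha)}(\cos\theta)=1-\frac{n(n+2\alpha+1)}{4(\alpha+1)}\theta^2+o(\theta^2)$, hence $\hat\mu(\theta)=1-C\theta^2+o(\theta^2)$ and $\hat\mu(u/\sqrt n)^n\to e^{-Cu^2}$. Second, the weight asymptotics from (\ref{ww}) via Stirling, $w_{\lfloor x\sqrt n\rfloor}^{(\alpha)}\sim x^{2\alpha+1}n^{\alpha+1/2}/(2^{2\alpha}\Gamma(\alpha+1)^2)$. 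Third, and most crucial, a Mehler--Heine asymptotic for the polynomial of growing degree: with $k_n=\lfloor x\sqrt n\rfloor$ the rescaled angle satisfies $k_n\,(u/\sqrt n)\to xu$, so (in the normalization $P_n^{(\alpha)}(1)=1$) $P_{k_n}^{(\alpha)}(\cos(u/\sqrt n))\to\Gamma(\alpha+1)(xu/2)^{-\alpha}J_\alpha(xu)$.

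Inserting these three facts into the representation, replacing $\sin^{2\alpha+1}(u/\sqrt n)$ by $(u/\sqrt n)^{2\alpha+1}$, and collecting powers of $n$ (which combine to exactly $n^{-1/2}$, matching the prefactor $\sqrt n$), I expect to obtain for the diagonal case $\sqrt n\,p^{(n)}(k_n,k_n)\to x\int_0^\infty u\,J_\alpha(xu)^2\,e^{-Cu^2}\,\mbox{\rm d}u$, and for the other case $\sqrt n\,p^{(n)}(0,k_n)\to\frac{x^{\alpha+1}}{2^\alpha\Gamma(\alpha+1)}\int_0^\infty u^{\alpha+1}J_\alpha(xu)e^{-Cu^2}\,\mbox{\rm d}u$. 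The closed forms (\ref{eq:1}) and (\ref{eq:2}) would then follow from the classical Weber integrals $\int_0^\infty u\,e^{-Cu^2}J_\alpha(au)J_\alpha(bu)\,\mbox{\rm d}u=\frac{1}{2C}e^{-(a^2+b^2)/(4C)}I_\alpha(ab/(2C))$ (with $a=b=x$) and $\int_0^\infty u^{\alpha+1}e^{-Cu^2}J_\alpha(xu)\,\mbox{\rm d}u=\frac{x^\alpha}{(2C)^{\alpha+1}}e^{-x^2/(4C)}$.

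The hard part will be justifying the passage to the limit inside the integral, which is exactly what is suppressed in the informal computation above. The contribution of $\theta$ bounded away from $0$ must be shown negligible: here aperiodicity is essential, since for $\theta$ in compact subsets of $(0,\pi)$ the polynomials satisfy $|P_n^{(\alpha)}(\cos\theta)|<1$, while at $\theta=\pi$ the identity $\hat\mu(\pi)=\sum_n\mu(n)(-1)^n$ together with the support condition forces $|\hat\mu(\pi)|<1$; by continuity $\sup_{\theta\in[\delta,\pi]}|\hat\mu(\theta)|\le 1-\eta$, and the $n$-th power kills this range. On the remaining window near the origin I would invoke dominated convergence, majorizing $\hat\mu(u/\sqrt n)^n$ by an integrable Gaussian (using a uniform bound $\hat\mu(\theta)\le 1-C'\theta^2$ valid on a fixed neighbourhood of $0$ where $\hat\mu\ge 0$) and bounding $P_{k_n}^{(\alpha)}(\cos(u/\sqrt n))$ uniformly in $n$ so that the integrands are dominated by the limiting Bessel profiles. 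Securing this uniform domination, rather than any individual computation, is where the real work lies.
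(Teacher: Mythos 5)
Your proposal is correct and follows essentially the same route as the paper: Fourier inversion via (\ref{four}), the scaling $\theta=u/\sqrt n$, the Stirling asymptotics for $w^{(\alpha)}_{\lfloor x\sqrt n\rfloor}$, the Mehler--Heine limit (Szeg\H{o} 8.21.12) producing the Bessel profile, and the two Weber integrals yielding the closed forms; your constants check out in both cases. The tail and domination estimates you flag as the real work are exactly the pieces the paper itself disposes of by splitting the integral into $I_0,\dots,I_4$ and referring back to the proof of Theorem \ref{llt}.
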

\begin{proof}
From Formula (\ref{four}),
\begin{eqnarray*}
\sqrt{n} p^{(n)} (\lfloor x\sqrt{n}\rfloor,\lfloor x\sqrt{n}\rfloor) &=& \sqrt{n}\ w_{\lfloor x\sqrt{n}\rfloor}^{(\alpha)} \int_0^{\pi} \hat{\mu}(\theta)^n \left(P_{\lfloor x\sqrt{n}\rfloor}^{(\alpha)}(\cos(\theta))  \sin^{\alpha+\frac{1}{2}}(\theta)\right)^2 \, {\rm d}\theta\\
&=&  w_{\lfloor x\sqrt{n}\rfloor}^{(\alpha)} \int_0^{\pi\sqrt{n}} \hat{\mu}\Big(\frac{\theta}{\sqrt{n}}\Big)^n \left(P_{\lfloor x\sqrt{n}\rfloor}^{(\alpha)}(\cos(\frac{\theta}{\sqrt{n}}))\sin^{\alpha+\frac{1}{2}} \Big(\frac{\theta}{\sqrt{n}}\Big)\right)^2 \, {\rm d}\theta
\end{eqnarray*}
using the change of variables $u=\sqrt{n}\theta$.\\*
The right-hand side is then decomposed as the sum of the following integrals~:
\begin{eqnarray*}
I_0(n)&=& w_{\lfloor x\sqrt{n}\rfloor}^{(\alpha)} \int_0^{\infty} \exp(-C\theta^2) \left(P_{\lfloor x\sqrt{n}\rfloor}^{(\alpha)}\Big(\cos\Big(\frac{\theta}{\sqrt{n}}\Big)\Big) \sin^{\alpha+\frac{1}{2}} \Big(\frac{\theta}{\sqrt{n}}\Big)\right)^2 \, \mbox{\rm d}\theta\\
I_1(n,A) &=& w_{\lfloor x\sqrt{n}\rfloor}^{(\alpha)} \int_0^{A} \Big[\hat{\mu}\Big(\frac{\theta}{\sqrt{n}}\Big)^n
-\exp(-C\theta^2)\Big]\left(P_{\lfloor x\sqrt{n}\rfloor}^{(\alpha)}\Big(\cos\Big(\frac{\theta}{\sqrt{n}}\Big)\Big) \sin^{\alpha+\frac{1}{2}} \Big(\frac{\theta}{\sqrt{n}}\Big)\right)^2 \, \mbox{\rm d}\theta\\
I_2(n,A) &=& - w_{\lfloor x\sqrt{n}\rfloor}^{(\alpha)} \int_A^{+\infty} \exp(-C\theta^2) \left(P_{\lfloor x\sqrt{n}\rfloor}^{(\alpha)}\Big(\cos\Big(\frac{\theta}{\sqrt{n}}\Big)\Big)\sin^{\alpha+\frac{1}{2}} \Big(\frac{\theta}{\sqrt{n}}\Big)\right)^2 \, \mbox{\rm d}\theta\\
I_3(n,A,r)&=&  w_{\lfloor x\sqrt{n}\rfloor}^{(\alpha)} \int_A^{r\sqrt{n}} \hat{\mu}\Big(\frac{\theta}{\sqrt{n}}\Big)^n \left(P_{\lfloor x\sqrt{n}\rfloor}^{(\alpha)}\Big(\cos\Big(\frac{\theta}{\sqrt{n}}\Big)\Big)\sin^{\alpha+\frac{1}{2}} \Big(\frac{\theta}{\sqrt{n}}\Big)\right)^2 \, \mbox{\rm d}\theta\\
I_4(n,r)&=&w_{\lfloor x\sqrt{n}\rfloor}^{(\alpha)} \int_{r\sqrt{n}}^{\pi\sqrt{n}} \hat{\mu}\Big(\frac{\theta}{\sqrt{n}}\Big)^n \left(P_{\lfloor x\sqrt{n}\rfloor}^{(\alpha)}\Big(\cos\Big(\frac{\theta}{\sqrt{n}}\Big)\Big) \sin^{\alpha+\frac{1}{2}} \Big(\frac{\theta}{\sqrt{n}}\Big)\right)^2 \, \mbox{\rm d}\theta
\end{eqnarray*}
We only give the way of estimating $I_0(n)$ for $n$ large. The integrals $I_j, j=1,\ldots,4$ can be proved to be negligible as in the proof of Theorem \ref{llt}.  
From the definition of the $w_x^{(\alpha)}$'s, we easily deduce that as $n\rightarrow+\infty$,  
$$w_{\lfloor x\sqrt{n}\rfloor}^{(\alpha)}\sim \frac{x^{2\alpha+1} n^{\alpha+\frac{1}{2}}}{2^{2\alpha} \Gamma(\alpha+1)^2}.$$
Moreover, from Formula 8.21.12 in \cite{SZ}, we get   
$$\lim_{n\rightarrow +\infty} P_{\lfloor x\sqrt{n}\rfloor}^{(\alpha)}\Big(\cos\Big(\frac{\theta}{\sqrt{n}}\Big)\Big)= \frac{2^{\alpha}\Gamma(\alpha+1)J_{\alpha}(\theta x)}{\theta^{\alpha} x^{\alpha}}$$
where $J_{\alpha}$ is the Bessel function of index $\alpha$ which yields (\ref{eq:1}) from dominated convergence theorem, by remarking that
$$\int_{\bbsR_{+}} e^{-C\theta^2} (\theta x) J_{\alpha}^2(\theta x) {\rm d}\theta=\frac{x}{2C} e^{-x^2/2C}I_{\alpha} (\frac{x^2}{2C}).$$
We obtain (\ref{eq:2}) by remarking that we have 
$$\int_{\bbsR_{+}} e^{-C\theta^2} (\theta x)^{\alpha+1} J_{\alpha}(\theta  x)\, {\rm d}\theta =\frac{x^{2\alpha+1}}{(2C)^{\alpha+1}} e^{-\frac{x^2}{4C}}.$$
\end{proof}

\section{Limit distribution of the local time}
The local time $(N_n(x))_{n\geq 0; x\in\bbsN}$ of the Markov chain $(S_n)_{n\in\bbsN}$ defined in Section 3.2 is equal to the number of times the chain visits the site $x$ up to time $n$, namely
$$N_n(x)=\sum_{k=0}^n {\bf 1}_{\{S_k=x\}}.$$
For every $x\in\bbN$, we denote by $\bbP_x$ the distribution of the Markov chain $(S_n)_{n\geq 0}$ starting from $x$ and by $\bbE_x$ the corresponding expectation. We prove in the case $\alpha\in [-\frac{1}{2},0]$ the following limit theorem for the local time $(N_n(x))_{n\geq 0; x\in\bbsN}$.
\begin{theo}\label{lt} 
Assume that $\mu$ is aperiodic with a finite second order moment.
\begin{itemize} 
\item When $\alpha\in[-\frac{1}{2},0[$, for every $x,y\in\bbN$, under $\bbP_x$,
\begin{equation}\label{res1}
\frac{N_n(y)}{n^{|\alpha|}}\stackrel{\mathcal{L}}{\longrightarrow} \frac{w_y^{(\alpha)}\Gamma(\alpha+1) \Gamma(|\alpha|)}{2 C^{\alpha+1}} {\cal M}(|\alpha|)
\end{equation}
where the $w_x^{(\alpha)}$'s are defined in Formulae (\ref{ww}) and (\ref{www}).
\item When $\alpha=0$, for every $x,y\in\bbN$, under $\bbP_x$,
$$ \frac{N_n(y)}{\log n}\stackrel{\mathcal{L}}{\longrightarrow} \frac{(2y+1)}{4 C}\  {\cal E}(1)$$
where ${\cal E}(1)$ denotes the exponential distribution with parameter one. 
\end{itemize}
\end{theo}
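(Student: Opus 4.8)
The plan is to prove both statements by the method of moments, feeding the local limit theorem (Theorem \ref{llt}) into a Riemann-sum computation. Writing $N_n(y)^p=\sum_{k_1,\dots,k_p=0}^{n}\prod_{i=1}^{p}{\bf 1}_{\{S_{k_i}=y\}}$ and taking expectation under $\bbP_x$, the Markov property gives, for ordered indices $0\le k_1\le\cdots\le k_p\le n$,
\begin{equation*}
\bbP_x(S_{k_1}=y,\dots,S_{k_p}=y)=p^{(k_1)}(x,y)\prod_{i=2}^{p} p^{(k_i-k_{i-1})}(y,y).
\end{equation*}
Since terms with two coinciding indices involve at most $p-1$ genuine gaps and are therefore of lower order, the $p!$ orderings yield
\begin{equation*}
\bbE_x[N_n(y)^p]\sim p!\sum_{0\le k_1<\cdots<k_p\le n}p^{(k_1)}(x,y)\prod_{i=2}^{p} p^{(k_i-k_{i-1})}(y,y).
\end{equation*}

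For $\alpha\in[-\tfrac12,0[$ I would substitute Theorem \ref{llt}, which gives $p^{(m)}(x,y)\sim K_y\,m^{-(\alpha+1)}$ and $p^{(m)}(y,y)\sim K_y\,m^{-(\alpha+1)}$ with $K_y=\frac{w_y^{(\alpha)}\Gamma(\alpha+1)}{2C^{\alpha+1}}$, the asymptotics being independent of the starting point. Setting $g_1=k_1$, $g_i=k_i-k_{i-1}$ and rescaling $g_i=n s_i$, and using $\alpha+1=1-|\alpha|$ so that each $m^{-(\alpha+1)}$ becomes $n^{-(1-|\alpha|)}s_i^{|\alpha|-1}$, the sum converges to
\begin{equation*}
n^{p|\alpha|}K_y^{p}\int_{s_i>0,\ \sum_i s_i\le 1}\ \prod_{i=1}^{p}s_i^{|\alpha|-1}\,\mathrm ds=n^{p|\alpha|}K_y^{p}\,\frac{\Gamma(|\alpha|)^{p}}{\Gamma(1+p|\alpha|)},
\end{equation*}
the Dirichlet integral being finite precisely because $|\alpha|>0$. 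Hence $\bbE_x[(N_n(y)/n^{|\alpha|})^{p}]\longrightarrow (K_y\Gamma(|\alpha|))^{p}\,p!/\Gamma(1+p|\alpha|)$. The factor $p!/\Gamma(p|\alpha|+1)$ is exactly the $p$-th moment of ${\cal M}(|\alpha|)$, so these are the moments of $K_y\Gamma(|\alpha|){\cal M}(|\alpha|)$, which is the right-hand side of (\ref{res1}). Because these moments grow like $p^{p(1-|\alpha|)}$ they satisfy Carleman's condition, the Mittag-Leffler law is moment-determinate, and convergence in distribution follows.

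For $\alpha=0$ the same factorization holds with $K_y=\frac{w_y^{(0)}}{2C}=\frac{2y+1}{4C}$ (since $w_y^{(0)}=(2y+1)/2$), but now the exponent $\alpha+1$ equals $1$ and the integrand $\prod_i s_i^{-1}$ is no longer integrable on the simplex. This borderline non-integrability replaces the power rescaling by a logarithmic one: a direct estimate of $\sum_{0\le k_1<\cdots<k_p\le n}\prod_{i=1}^{p}(k_i-k_{i-1})^{-1}$ (with $k_0=0$) shows it is asymptotic to $(\log n)^{p}$, the divergence accumulating in the corner where all gaps are small while the constraint $\sum_i g_i\le n$ only affects lower-order terms. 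Consequently $\bbE_x[(N_n(y)/\log n)^{p}]\longrightarrow p!\,(\tfrac{2y+1}{4C})^{p}$, which are the moments of $\tfrac{2y+1}{4C}\,{\cal E}(1)$; as the exponential law is moment-determinate, the stated convergence holds.

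The main obstacle is the rigorous passage from the multiple sum to the Dirichlet integral. It rests on a uniform bound $p^{(m)}(\cdot,y)\le C'm^{-(\alpha+1)}$ valid for all $m\ge 1$, obtained by combining Theorem \ref{llt} for large $m$ with the trivial bound $p^{(m)}\le 1$ for the finitely many small $m$; this dominating bound lets one apply dominated convergence on the simplex and simultaneously control the region of small gaps, where the local limit asymptotics fail, and the diagonal terms. The aperiodicity hypothesis is what guarantees a clean single-scale asymptotic with no parity correction. The critical case $\alpha=0$ is the most delicate point, since there the non-integrability forces a separate argument to pin down the exact $(\log n)^{p}$ constant. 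Finally, this theorem is the local-time analogue of the Darling--Kac theorem and is consistent, through the functional central limit theorem \cite{Mab} and the Bessel local-time identity (\ref{loc}), with the law of $L_1^{(\alpha)}(0)$.
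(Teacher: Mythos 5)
Your proposal is correct, and its skeleton coincides with the paper's: the same method of moments, the same reduction of $\bbE_x[N_n(y)^p]$ to $p!$ times an ordered sum of products of transition probabilities with a negligible diagonal remainder, the same Carleman/Mittag-Leffler identification at the end. Where you genuinely diverge is in how the ordered convolution sum is evaluated. The paper passes to generating functions: it sets $F_{x,y}(\lambda)=\sum_n\lambda^np^{(n)}(x,y)$, derives $F_{x,y}(\lambda)\sim \frac{w_y^{(\alpha)}\Gamma(\alpha+1)\Gamma(|\alpha|)}{2C^{\alpha+1}}(1-\lambda)^{-|\alpha|}$ from Theorem \ref{llt} via Karamata's Tauberian theorem, observes that the generating function of the main sum factorizes exactly as $p!\,(1-\lambda)^{-1}F_{x,y}(\lambda)F_{y,y}(\lambda)^{p-1}$, and applies the Tauberian theorem a second time (legitimately, since the main sums are nondecreasing in $n$) to recover the moment asymptotics; your factor $\Gamma(|\alpha|)^p/\Gamma(1+p|\alpha|)$ appears there automatically from the exponents. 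You instead substitute the local limit asymptotics directly, rescale the gaps, and evaluate the Dirichlet integral over the simplex. The two routes give the same constants; the generating-function route buys exact factorization and outsources all error control to the Tauberian theorem, while your route requires the uniform domination bound $p^{(m)}(\cdot,y)\le C'm^{-(\alpha+1)}$ that you correctly identify as the crux of the passage to the limit. Your treatment of $\alpha=0$ via the direct estimate $\sum_{g_1+\cdots+g_p\le n}\prod g_i^{-1}\sim(\log n)^p$ is in fact more explicit than the paper, which dismisses that case as ``similar and omitted'' (in the generating-function language it corresponds to $F_{y,y}(\lambda)\sim\frac{2y+1}{4C}\log\frac{1}{1-\lambda}$). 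All constants check out: $K_y\Gamma(|\alpha|)$ matches the coefficient in (\ref{res1}), and $w_y^{(0)}=(2y+1)/2$ gives $\frac{2y+1}{4C}$.
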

\noindent{\bf Remark:}\\*
From (\ref{loc}), the limit distribution in Formula $(\ref{res1})$ is equal to the law of the random variable
$$\frac{(2y+2\alpha+1)\Gamma(y+2\alpha+1)}{\Gamma(y+1)(2C)^{\alpha+1}} L_1^{(\alpha)}(0)$$
where $L_1^{(\alpha)}(0)$ denotes the local time at 0 of the Bessel process with index $\alpha$.
\begin{proof}
Assume that $\alpha\in[-\frac{1}{2},0[$. For every $x,y\in\bbN$, we denote by $F_{x,y}$ the generating function of the sequence $\left(p^{(n)}(x,y)\right)_{n\geq 0}$,
namely for every $\lambda\in [0,1[$,
$$F_{x,y}(\lambda)= \sum_{n=0}^{\infty} \lambda^n p^{(n)}(x,y).$$
From Theorem \ref{llt}, for every $\varepsilon>0$, there exists $n_0$ such that for every $n\geq n_0$,
$$\frac{(1-\varepsilon)w_y^{(\alpha)} \Gamma(\alpha+1)}{2(Cn)^{\alpha+1}}  \leq p^{(n)}(x,y) \leq \frac{(1+\varepsilon)w_y^{(\alpha)} \Gamma(\alpha+1)}{2(Cn)^{\alpha+1}}.$$
From Tauberian theorem for power series (see Feller \cite{FE}, p. 447), we deduce that, as $\lambda\rightarrow 1^{-}$,
\begin{equation}\label{simi}
F_{x,y}(\lambda)\sim\frac{w_y^{(\alpha)} \Gamma(\alpha+1) \Gamma(|\alpha|)}{2 C^{(\alpha+1)}(1-\lambda)^{|\alpha|}}.
\end{equation}
Let $p\geq 1$, by combining all permutations of the sames indices $j_1,\ldots,j_p$, we have
\begin{eqnarray}\label{decom} 
\bbE_x(N_n(y)^p)&=& \sum_{0\leq j_1, \ldots, j_p\leq n} \bbP_x(S_{j_1}=\ldots=S_{j_p}=y)\nonumber \\
&=& p! \sum_{0\leq j_1\leq \ldots\leq j_p\leq n} p^{(j_1)}(x,y)p^{(j_2-j_1)}(y,y)\ldots p^{(j_p -j_{p-1})}(y,y)+R_n \ \ \ \ \ \ \ \, \, 
\end{eqnarray} 
The remainder term $R_n$ contains the sums over the $q$-tuples $(j_1,\ldots,j_q) \in\ \{0,\ldots,n\}^q$ with $q<p$. From Theorem $\ref{llt}$, we deduce that $R_n={\cal O}(n^{|\alpha| q})=o(n^{|\alpha|p})$, so it will be negligible in the limit. \\* 
We denote by $m_n:=m_n(x,p)$ the first sum in the right-hand side of (\ref{decom}) and by $G$ the generating function of the sequence $(m_n)_{n\geq 0}$, that is, for every $\lambda\in [0,1[$,
$$G(\lambda)= \sum_{n=0}^{\infty} \lambda^n m_n$$
which can be rewritten as 
\begin{eqnarray*}
G(\lambda)&=& p! \sum_{n=0}^{\infty} \sum_{0\leq j_1\leq \ldots\leq j_p\leq n}\left(\lambda^{j_1} p^{(j_1)}(x,y)\right) \left(\lambda^{j_2-j_1}p^{(j_2-j_1)}(y,y)\right)\ldots \left(\lambda^{j_p-j_{p-1}}p^{(j_p -j_{p-1})}(y,y)\right) \lambda^{n-j_p}\\
&=& p!  \sum_{n=0}^{\infty} \sum_{m_1+\ldots +m_{p+1}=n;m_i\geq 0}\left(\lambda^{m_1} p^{(m_1)}(x,y)\right) \left(\lambda^{m_2}p^{(m_2)}(y,y)\right)\ldots \left(\lambda^{m_p}p^{(m_p)}(y,y)\right) \lambda^{m_{p+1}}\\
&=&p! \left(\frac{1}{1-\lambda}\right) F_{x,y}(\lambda) \Big(F_{y,y}(\lambda)\Big)^{p-1}
\end{eqnarray*}
From (\ref{simi}), we deduce that, as $\lambda\rightarrow 1^{-}$, 
\begin{equation}
G(\lambda)\displaystyle\sim p! \frac{(w_y^{(\alpha)} \Gamma(\alpha+1) \Gamma(|\alpha|))^{p}}{2^p C^{p(\alpha+1)}(1-\lambda)^{|\alpha|p+1}}.
\end{equation} 
Then, from Tauberian theorem for power series (see Feller \cite{FE}, p. 447), we get as $n\rightarrow +\infty$,
$$ \bbE_x\left(\left(\frac{N_n(y)}{n^{|\alpha|}}\right)^p\right)\sim \frac{p!}{\Gamma(|\alpha|p+1)} \left(\frac{w_y^{(\alpha)} \Gamma(\alpha+1)\Gamma(|\alpha|)}{2C^{\alpha+1}}\right)^p=:\beta_p.$$
The Carleman condition 
$$\sum_{p=1}^{+\infty} \frac{1}{\beta_{2p}^{1/2p}}=+\infty$$
being satisfied, the limit distribution is uniquely determined and the weak convergence is proved. We characterize the limit distribution by recognizing the moments of the Mittag-Leffler distribution ${\cal M}(|\alpha|)$ (see Section 2).\\*
The proof in the case $\alpha=0$ is similar and is omitted.
\end{proof}
When $\mu=\delta_1$, the random walks associated with Gegenbauer polynomials are the birth and death Markov chains on $\bbN$ with transition probabilities $(p(i,j))_{i,j\in\bbsN}$ given by $p(0,1)=1 $ and
$$p(i,i+1)=\frac{i+2\alpha+1}{2i+2\alpha+1}; \ \ \  p(i,i-1)=\frac{i}{2i+2\alpha+1}.$$ 
When $\alpha\in [-1/2,0],$ the Markov chain is positive recurrent; we still denote by $(N_n(x))_{n\in\bbsN; x\in \bbsN}$ its local time. Thanks to Proposition $\ref{apero}$, we can adapt the proof of the previous theorem to provide a complete description of the limit behaviour of these local times. 
When $\alpha=-1/2$, the Markov chain corresponds to the simple random walk on $\bbN$ with reflection at 0. The mean number of times the Markov chain visits 0 is asymptotically equal to $\sqrt{n}$. The random walk associated with Gegenbauer polynomials with index $\alpha=0$ is the birth and death Markov chain on $\bbN$ with transition probabilities given by 
$$p(0,1)=1 $$
and
$$p(i,i+1)=\frac{i+1}{2i+1}, \ \ \ p(i,i-1)=\frac{i}{2i+1}.$$
In that case, the mean number of times the Markov chain visits 0 is asymptotically equal to $\frac{1}{2} \log(n)$.
More precisely, we have 
\begin{pr}\label{prlt}
\begin{itemize} 
\item When $\alpha\in[-\frac{1}{2},0[$, for every $x,y\in\bbN$, under the measure $\bbP_x$,
\begin{equation}
\frac{N_n(y)}{n^{|\alpha|}} \stackrel{\mathcal{L}}{\longrightarrow} 
\frac{(2y+2\alpha+1) \Gamma(y+2\alpha+1)\Gamma(|\alpha|)}{2^{\alpha+1} \Gamma(y+1) \Gamma(\alpha+1)} {\cal M}(|\alpha|).
\end{equation}
(with the convention $0\times \Gamma(0)=1$).
\item When $\alpha=0$, for every $x,y\in\bbN$, under the measure $\bbP_x$, 
\begin{equation}
\frac{N_n(y)}{\log(n)} \stackrel{\mathcal{L}}{\longrightarrow} {\cal E}\left(\frac{2}{2y+1}\right).
\end{equation}
\end{itemize}
\end{pr}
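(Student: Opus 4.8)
The plan is to follow the proof of Theorem~\ref{lt} line by line, replacing the aperiodic local limit theorem (Theorem~\ref{llt}) by its periodic version (Proposition~\ref{apero}) and specialising the diffusion constant to the value it takes for $\mu=\delta_1$. First I would record that here $C=\frac{1}{4(\alpha+1)}\cdot 1\cdot(1+2\alpha+1)=\frac12$, and that the chain is $2$-periodic: each step moves by $\pm1$, so $p^{(n)}(x,y)=0$ unless $n+x+y$ is even, while on the surviving parity Proposition~\ref{apero} gives $p^{(n)}(x,y)\sim w_y^{(\alpha)}2^{\alpha+1}\Gamma(\alpha+1)\,n^{-(\alpha+1)}$.

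The key observation is that the period-$2$ structure disappears at the level of the generating function $F_{x,y}(\lambda)=\sum_n\lambda^n p^{(n)}(x,y)$, so that~(\ref{simi}) survives unchanged with $C=\frac12$. Indeed, applying the Tauberian theorem through the partial sums $\sum_{k\le n}p^{(k)}(x,y)$ — which automatically average over the parity gaps — the vanishing of half the terms and the compensating factor $2$ in the surviving constant of Proposition~\ref{apero} cancel exactly: summing $w_y^{(\alpha)}2^{\alpha+1}\Gamma(\alpha+1)k^{-(\alpha+1)}$ over $k\le n$ of the correct parity yields $\sim\frac{w_y^{(\alpha)}2^{\alpha}\Gamma(\alpha+1)}{|\alpha|}\,n^{|\alpha|}$, hence $F_{x,y}(\lambda)\sim w_y^{(\alpha)}2^{\alpha}\Gamma(\alpha+1)\Gamma(|\alpha|)(1-\lambda)^{-|\alpha|}$, and likewise for $F_{y,y}$. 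This is precisely the right-hand side of~(\ref{simi}) at $C=\frac12$. I expect this bookkeeping to be the one genuinely delicate point: one must pass through the partial sums rather than apply a Tauberian statement naively term by term, so that the periodicity is correctly smoothed out.

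With~(\ref{simi}) in force at $C=\frac12$, the rest of the proof of Theorem~\ref{lt} carries over verbatim. The decomposition~(\ref{decom}) is unaffected since the bound $p^{(n)}(x,y)=O(n^{-(\alpha+1)})$ still gives $R_n=o(n^{|\alpha|p})$; the identity $G(\lambda)=p!\,(1-\lambda)^{-1}F_{x,y}(\lambda)\big(F_{y,y}(\lambda)\big)^{p-1}$ is formal and thus still holds; and a second use of the Tauberian theorem gives $\bbE_x\big((N_n(y)/n^{|\alpha|})^p\big)\to \frac{p!}{\Gamma(|\alpha|p+1)}\,\kappa^p$ with $\kappa=\frac{w_y^{(\alpha)}\Gamma(\alpha+1)\Gamma(|\alpha|)}{2C^{\alpha+1}}$. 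Substituting $C=\frac12$ and the explicit $w_y^{(\alpha)}$ from~(\ref{ww}) collapses $\kappa$ to $\frac{(2y+2\alpha+1)\Gamma(y+2\alpha+1)\Gamma(|\alpha|)}{2^{\alpha+1}\Gamma(y+1)\Gamma(\alpha+1)}$; these are the moments of the announced limit, and Carleman's condition together with the moment method closes the case $\alpha\in[-\frac12,0[$. The convention $0\times\Gamma(0)=1$ simply absorbs the degenerate value $\alpha=-\frac12$, $y=0$, where $w_0^{(\alpha)}$ is given by the special formula~(\ref{www}).

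For $\alpha=0$ I would run the same scheme with the logarithmic normalisation. Proposition~\ref{apero} now gives $p^{(n)}(x,y)\sim 2w_y^{(0)}n^{-1}$ on the correct parity, so $\sum_{k\le n}p^{(k)}(x,y)\sim w_y^{(0)}\log n$ and $F_{x,y}(\lambda)\sim w_y^{(0)}\log\frac{1}{1-\lambda}$ — again matching Theorem~\ref{lt} at $C=\frac12$. Since $w_y^{(0)}=\frac{2y+1}{2}$, propagating this through $G$ and the Tauberian theorem yields $\bbE_x\big((N_n(y)/\log n)^p\big)\to p!\,(w_y^{(0)})^p$, the moments of $w_y^{(0)}\,{\cal E}(1)=\frac{2y+1}{2}\,{\cal E}(1)={\cal E}\big(\frac{2}{2y+1}\big)$, as claimed.
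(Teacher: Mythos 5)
Your proposal is correct and follows exactly the route the paper intends: the paper gives no separate proof of Proposition~\ref{prlt}, merely noting that one adapts the proof of Theorem~\ref{lt} by substituting Proposition~\ref{apero} for Theorem~\ref{llt} with $C=\tfrac12$, which is precisely what you carry out. Your explicit handling of the period-$2$ cancellation through partial sums, and the verification that the resulting constant collapses to the stated one (including the $\alpha=-\tfrac12$, $y=0$ convention), correctly fills in the details the paper leaves to the reader.
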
 

\noindent{\bf Acknowledgement:} The author is very grateful to referees for their helpful comments.

\end{document}